 \newcommand{\eric}[1]{{\color{red} \sf $\clubsuit\clubsuit\clubsuit$ Eric: [#1]}}
  \newcommand{\taylor}[1]{{\color{purple} \sf $\diamondsuit\diamondsuit\diamondsuit$ Taylor: [#1]}}
\title[Total $p$-differentials]{Total $p$-differentials on schemes over $\Z/p^2$}
\author{Taylor Dupuy}
\address{Taylor Dupuy, Department of Mathematics \& Statistics, University of Vermont, Burlington, VT 05405, USA}
\email{taylor.dupuy@uvm.edu}
\author{Eric Katz}
\address{Eric Katz, Department of Mathematics, The Ohio State University University, 231 W. 18th Ave., Columbus, OH 43210, USA}
\email{katz.60@osu.edu}
\author{Joseph Rabinoff}
\address{Joseph Rabinoff, School of Mathematics, Georgia Institute of Technology, Atlanta, GA 30332-0160, USA}
\email{jrabinoff@math.gatech.edu}
\author{David Zureick-Brown}
\address{David Zureick-Brown, Dept. of Math and CS, Emory University, 400 Dowman
 Dr., W401, Atlanta, GA 30322, USA}
\email{dzb@mathcs.emory.edu}
\newcommand{\defi}[1]{\textbf{#1}} 				% for defined terms
\def\presuper#1#2%
\newcommand{\DI}{\operatorname{DI}}
\newcommand{\FKS}{\operatorname{FKS}}
\newcommand{\OO}{\mathcal{O}}
\newcommand{\Def}{\operatorname{Def}}
\newcommand{\id}{\operatorname{id}}
\newcommand{\FDer}{\operatorname{FDer}}
\begin{document}

\begin{abstract}
For a scheme $X$ defined over the length $2$ $p$-typical Witt vectors $W_2(\bk)$ of a characteristic $p$ field, we introduce total $p$-differentials which interpolate between Frobenius-twisted differentials and Buium's $p$-differentials. They form a sheaf over the reduction $X_0$, and behave as if they were the sheaf of differentials of $X$ over a deeper base below $W_2(\bk)$. This allows us to construct the analogues of Gauss--Manin connections and Kodaira--Spencer classes as in the Katz--Oda formalism.  We make connections to Frobenius lifts, Borger--Weiland's biring formalism, and Deligne--Illusie classes.
\end{abstract}

%\tableofcontents
%\newpage

\maketitle

\section{Introduction}

In a series of papers, Alexandru Buium introduced a notion of arithmetic derivations, extending the analogy between function fields like $\C[[t]]$ and mixed characteristic local fields like $\Z_p^{\unr}$  (see the textbook \cite{buium:2005}). For example, there is an arithmetic derivation operator on $\Z_p^{\unr}$, called a \defi{$p$-derivation,} defined by
\[\delta x=\frac{\phi(x)-x^p}{p}, \]
where $\phi$ is the Frobenius. While $\delta$ does not obey the usual Leibniz rule and is not even additive, it has many formal properties in common with derivations. Just as the usual notion of derivation measures non-constancy of an object over a base, this arithmetic derivation measures how much the Frobenius action on an object over $\Z_p^{\unr}$ differs from $x\mapsto x^p$:  the arithmetic analogue of triviality of a family is the existence of a lift of Frobenius. This is consistent with Borger's approach to the field of one element \cite{Borger:lambdarings}, where one views lifts of Frobenius as a necessary condition for descent to $\F_1$, just as a vector field on a family extending that on the base is a necessary condition for descent from $\C[[t]]$ to $\C$.

More generally, Buium~\cite{Buium1997} classified all arithmetic analogues of derivations on a local ring $R$ of characteristic zero.  By an ``arithmetic analogue of a derivation,'' we mean an operation $\theta\colon R \to R$ on a ring $R$, which has a sum rule, a product rule, and which is appropriately normalized to take certain values on 0 and 1. 
Buium proved that for $R$ a local ring of characteristic zero, there can only exist four types of operations up to equivalence: derivations, difference operations, $p$-derivations, and $p$-difference operations where $p$ is some prime. These various notions afford analogous definitions of differentials and jet spaces.

It turns out that operations $\theta$ satisfying the conditions imposed  in \cite{Buium1997} are intimately tied to ring schemes. 
Under this correspondence, usual derivations correspond to the ring scheme of dual numbers, and Buium's $p$-derivations correspond to the ring of $p$-typical Witt vectors of length two. 
%In fact, Buium's work can be formulated as a classification of ring schemes over a local field which are isomorphic (as schemes) to $\A^2$, and where one of the coordinate functions accounts for the identity operation.
In fact, the ring scheme formalism of Borger--Weiland \cite{borger2005plethystic} provides a uniform framework for formulating differentials and  jet spaces as well as working over a base.

%An interesting question that remains unanswered is the questions of deformations of ring schemes. 
%Is it possible to find families of ring schemes which contain, say, both derivatives and Buium's $p$-derivatives? 
%What about derivatives and difference operations? 
%Very little is currently known about general deformations of ring schemes
%(the only literature we know of being \cite{Deninger2017}).

One classical construction using differentials that has been missing its $p$-differential analogue is that of taking differentials on the total space. Specifically, if one has a scheme $\pi\colon X\to S$ smooth over a base, one may form the exact sequence
\[\xymatrix{0\ar[r]&\pi^*\Omega^1_S \ar[r]&\Omega^1_X \ar[r]&\Omega^1_{X/S}\ar[r]&0.}\]
This construction is crucial to Katz--Oda's construction of the Gauss--Manin connection \cite{KatzO:differentiation}. The purpose of this paper is to construct an analogue of the restriction of this exact sequence to a closed fiber, at least for the case where the base is $\Spec \Z/p^2$. In fact, we will consider schemes defined over $\Spec \Z/p^2$ (or more generally $W_2(\bk)$) as families over a ``deeper'' base below $\Z/p^2$ and measure their non-triviality. In this sense, our work comes into contact with the philosophy of the field of one element. The geometric counterpart of our construction can be found in \cite{DFR:orderone}. 

This paper accomplishes the goal by introducing a new class of arithmetic derivations from a ring modulo $p^2$ to a ring modulo $p$ that interpolates between Buium's $p$-derivations and ordinary Frobenius semi-linear derivations.  We call these \defi{total $p$-derivations}. They globalize on schemes $X$ defined over $W_2(\bk)$, the $p$-typical Witt vectors length $2$ of a field of characteristic $p$. In fact, their dual notion, total $p$-differentials form a sheaf $\Omega_X^{1,\tot}$ on $X_0$, the reduction of $X$. They fit into an exact sequence
\[\xymatrix{0\ar[r] &\cO_{X_0}\ar[r]^>>>>>\alpha&\Omega_X^{1,\tot}\ar[r]^>>>>>\beta&F_{X_0}^*\Omega^1_{X_0}\ar[r]&0}\] 
where $F_{X_0}^*\Omega^1_{X_0}$ is the pullback by absolute Frobenius of the sheaf of differentials on $X_0$. The appearance of $\cO_{X_0}$ is a reflection of the fact that $\Spec W_2(\bk)$ should be thought of as an object with a one-dimensional cotangent space over a ``deeper'' scheme.
%The projectivization of the total $p$-differentials give a natural completion of a torsor for the Frobenius cotangent space employed by Buium  \cite{buium:p-adic-jets}.
Regarding $p$-differentials as analogous to differentials on a total space, we construct analogues of the Gauss--Manin connection and the Kodaira--Spencer class.  The usual Gauss--Manin connection is a measure of the non-triviality of a local system arising as the cohomology of a family; our analogue measures the existence of lifts of Frobenius.  This gives an alternative view of Deligne--Illusie's deformation-theoretic approach~\cite{DeligneI:rel} to studying lifts of Frobenius.
The arithmetic analogue of the Kodaira--Spencer class via a different construction already appears with applications to diophantine geometry in \cite{buium:1995}. 
We also give a description of total $p$-differentials in terms of the biring formalism. Further discussion of the  arithmetic analogue of Kodaira--Spencer classes is in \cite{Dupuy:kodaira}.

%We remark that our interpolation between $p$-derivations and Frobenius derivations could be considered analogous to to an interpolation between derivations and difference operations. A large hope in this direction would be applications to the Jacobi Bounds problem \cite{Cohn:greenspan}---the differential algebraic analogue of a B\'ezout inequality which remains open in general.  Here, the possibility of applications comes from Hrushovski's work on difference equations \cite{Hrushovski2004}.

%\eric{At some point note that the equivalence between triviality and the existence of a Frobenius lift is natural. This is vanishing of the Serre--Tate parameter.}

\section{Total $p$-differentials}

\subsection{Total $p$-derivations}

Let $\bk$ be a field of characteristic $p>2$. Let $R=W_2(\bk)$ be the $p$-typical Witt vectors \cite{Serre1968} of length $2$. Let $\phi\colon R\to R$ be the Frobenius homomorphism. 
Let $A$ be a finitely generated flat $R$-algebra with mod $p$ reduction $A_0$. For $r\in R$, we will write $r$ for the element $r\cdot 1\in A$.
Let $C_p(X,Y)$ be the polynomial
\[C_p(X,Y)=\frac{X^p+Y^p-(X+Y)^p}{p}.\]

\begin{defnsub}\label{D:total-derivation} For an $A_0$-module $D_0$, a {\em total $p$-derivation} of $A$ into $D_0$ is a map $\delta\colon A\to D_0$ such that
\begin{enumerate}
\item for $a,b\in A$, $\delta(a+b)=\delta a+\delta b+(C_p(a,b))\delta p$,
\item for $a,b\in A$ $\delta(ab)=(\delta a)b^p+a^p(\delta b)$, and
\item \label{I:compute} for $r\in R$, $\delta r=((\phi(r)-r^p)/p) (\delta p)$.
\end{enumerate}
\end{defnsub}
Here, we treat $D_0$ as an $A$-module via the natural quotient $A\to A_0$. If $\delta p=0$, then the derivation is a {\em Frobenius semi-linear derivation}. If $\delta p=1$, then it is a $p$-derivation. Both of these notions are central to \cite{buium:p-adic-jets}. Total $p$-derivations interpolate between them.

\begin{example}
For each $c\in \Z/p$ we may specify a total $p$-derivation $\theta_{p,c}\colon \Z/p^2 \to \Z/p$ by setting (following item~\ref{I:compute}):
\[\theta_{p,c}(r)=c \left(\frac{r-r^p}{p}\right).\]
So, $\theta_{p,c}(p)=c$. 
Observe that  total $p$-derivations $\Z/p^2 \to \Z/p$ are determined by their value on $p$: 
if $\theta$ and $\theta'$ are two such derivations then $\theta-\theta'$ is a Frobenius semi-linear derivation, and the only Frobenius semi-linear derivation from $\Z/p^2$ to $\Z/p$ is zero. 
\end{example}

\subsection{Witt interpolation}

Another view of total $p$-derivations can be given in terms of a construction interpolating between length $2$ Witt vectors and a twisted infinitesimal thickening. Let $D_0$ be a $\bk$-algebra. For $c\in D_0$, let $U_c(D_0)$ be the ring whose underlying set is $D_0\times D_0$ equipped with the operations
\begin{eqnarray*}
(x_0,x_1)+(y_0,y_1)&=&(x_0+y_0,x_1+y_1+cC_p(x_0,y_0))\\
(x_0,x_1)\cdot (y_0,y_1)&=&(x_0y_0,x_0^py_1+y_0^px_1).
\end{eqnarray*}
It is easily seen that $U_1(D_0)=W_2(D_0)$. On the other hand, $U_0(D_0)$ is an infinitesimal thickening of $D_0$ twisted by Frobenius. The ring $U_c(D_0)$ can be equipped with an $R$-algebra structure (using the fact that $(1,0)$ is the multiplicative identity) by
\[r\cdot (1,0)=\left(r,c\left(\frac{\phi(r)-r^p}{p}\right)\right).\]
Observe that $p\cdot (1,0)=(0,c)$.
For any value of $c$, there is a homomorphism $U_c(B_0)\to B_0$ given by $(x_0,x_1)\mapsto x_0$. The kernel $I$ satisfies $I^2=0$ and $pI=0$.

The proof of the following is straightforward.
\begin{lemma}
For $e\in A_0,$ there is a homomorphism $h_e\colon U_c(D_0)\to U_{ce}(D_0)$ given by $h(x_0,x_1)=(x_0,ex_1)$.
\end{lemma}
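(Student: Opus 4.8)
The plan is to verify directly that the set map $h_e\colon D_0\times D_0\to D_0\times D_0$, $(x_0,x_1)\mapsto(x_0,ex_1)$, is a unital ring homomorphism $U_c(D_0)\to U_{ce}(D_0)$, where $e\in A_0$ acts on the second coordinate through whatever module/algebra structure $D_0$ carries (equivalently one may just take $e\in D_0$ and multiply in $D_0$; the computation is identical, and in either reading $ce\in D_0$ so the target ring is defined). Since $h_e$ is visibly well defined on underlying sets, there are only three things to check: compatibility with addition, with multiplication, and with the identity $(1,0)$; the same computations will simultaneously show $h_e$ is $R$-linear.

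The only part with any content is additivity, and the point of the statement is built into it: applying $h_e$ to $(x_0,x_1)+(y_0,y_1)=(x_0+y_0,\,x_1+y_1+cC_p(x_0,y_0))$ computed in $U_c(D_0)$ gives $(x_0+y_0,\,ex_1+ey_1+e\cdot cC_p(x_0,y_0))$, while $h_e(x_0,x_1)+h_e(y_0,y_1)$ computed in $U_{ce}(D_0)$ gives $(x_0+y_0,\,ex_1+ey_1+(ce)C_p(x_0,y_0))$, and these agree because scaling the second coordinate by $e$ carries the $c$-correction term to the $ce$-correction term, i.e.\ $e\cdot(cC_p(x_0,y_0))=(ce)C_p(x_0,y_0)$. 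Multiplicativity is then automatic: the product law $(x_0,x_1)\cdot(y_0,y_1)=(x_0y_0,\,x_0^py_1+y_0^px_1)$ does not involve the parameter at all, and it is additive (indeed $A_0$-linear) in the second coordinates $x_1,y_1$, so $h_e$ of the product and the product of the $h_e$'s both equal $(x_0y_0,\,x_0^p(ey_1)+y_0^p(ex_1))$.

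Finally $h_e(1,0)=(1,0)$ is the unit of $U_{ce}(D_0)$, so $h_e$ is unital; and evaluating on $r\cdot(1,0)=(r,\,c(\phi(r)-r^p)/p)$ shows $h_e$ commutes with the $R$-action, since multiplying the second coordinate by $e$ again replaces $c$ by $ce$, which is precisely the $R$-action on $U_{ce}(D_0)$. There is no real obstacle here; the only thing to be careful about is bookkeeping — keeping straight that source operations carry the parameter $c$ while target operations carry $ce$, and that $e$ may be moved inside the Frobenius-twisted product because that product is additive in each second coordinate.
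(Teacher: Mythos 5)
Your proof is correct: the paper itself omits the argument as ``straightforward,'' and your direct verification of additivity (where the factor $e$ converts the $cC_p(x_0,y_0)$ correction into the $ceC_p(x_0,y_0)$ correction), multiplicativity, unitality, and compatibility with the $R$-action is exactly the intended check. Nothing is missing.
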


\begin{proposition} 
Given a homomorphism $f\colon A_0\to D_0$ making $D_0$ into an $A_0$-module, total $p$-derivations $\delta$ of $A$ into $D_0$ with $\delta p=c\in D_0$ are in one-to-one correspondence with $R$-algebra homomorphisms
\[(f,\delta)\colon A\to U_c(D_0), \quad a\mapsto (f(\pi_0(a)),\delta a)\]
where $\pi_0\colon A\to A_0$ is reduction.
\end{proposition}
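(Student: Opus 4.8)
The plan is to make the correspondence explicit and then verify, property by property, that the homomorphism condition on the map $(f,\delta)$ is equivalent to the three conditions of Definition~\ref{D:total-derivation} together with the normalization $\delta p=c$. First I would note that $\delta\mapsto\bigl[(f,\delta)\colon a\mapsto(f(\pi_0(a)),\delta a)\bigr]$ is injective for trivial reasons, since $\delta$ is recovered as the second coordinate, and that conversely every $R$-algebra homomorphism $g\colon A\to U_c(D_0)$ whose first coordinate equals $f\circ\pi_0$ is automatically of this shape with $\delta=g_1$: indeed, for any $R$-algebra homomorphism $g$ the first coordinate $g_0\colon A\to D_0$ kills $p$ because $D_0$ has characteristic $p$, hence factors through $A_0$, so ``$g$ lies over $f$'' is exactly the condition that exhibits $g$ as a map of the form $(f,\delta)$. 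It therefore suffices to prove the one equivalence: the map $a\mapsto(f(\pi_0(a)),\delta a)$ is an $R$-algebra homomorphism if and only if $\delta$ is a total $p$-derivation of $A$ into $D_0$ with $\delta p=c$.

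To establish this equivalence I would simply match the operations that define $U_c(D_0)$ against the axioms for $\delta$, bearing in mind throughout that $D_0$ is given its $A$-module structure via $f\circ\pi_0$. Additivity of $(f,\delta)$, written out with the addition law of $U_c(D_0)$, says $\delta(a+b)=\delta a+\delta b+c\,C_p(f\pi_0a,f\pi_0b)$; since $\tfrac1p\binom pi\in\Z$ for $0<i<p$ we have $C_p(X,Y)\in\Z[X,Y]$, so $C_p$ commutes with the ring maps $\pi_0$ and $f$, and this is precisely axiom~(1) once $\delta p=c$. Multiplicativity similarly unwinds to $\delta(ab)=(f\pi_0a)^p\,\delta b+(f\pi_0b)^p\,\delta a$, which is axiom~(2) read in the $A$-module $D_0$. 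The unit condition $(f,\delta)(1)=(1,0)$ amounts to $\delta 1=0$, i.e.\ the $r=1$ instance of axiom~(3). Finally, compatibility with the two $R$-algebra structure maps says $\bigl(f\pi_0(r\cdot 1_A),\,\delta(r\cdot 1_A)\bigr)=r\cdot(1,0)=\bigl(r,\,c(\phi(r)-r^p)/p\bigr)$ for all $r\in R$; the first coordinates agree because $\pi_0$ and $f$ are homomorphisms over $\bk$, and equality of the second coordinates for every $r$ is exactly axiom~(3) together with the value $\delta p=c$ (obtained by taking $r=p$, using $p\cdot(1,0)=(0,c)$). Reading these equivalences in the appropriate direction yields both implications of the asserted correspondence.

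The hard part is bookkeeping rather than any real difficulty, and that is where I would be careful: (i) justifying that the universal polynomial relations transport across $\pi_0$ and $f$ --- namely $C_p\in\Z[X,Y]$, and the standard congruence $\phi(r)\equiv r^p\pmod{pR}$ in $W_2(\bk)$, which makes $(\phi(r)-r^p)/p$ well-defined modulo $p$ (its ambiguity lies in the annihilator of $p$ in $R$, which is $pR$) and hence well-defined after pairing with $\delta p\in D_0$ since $pD_0=0$; and (ii) being scrupulous that the terms $(\delta a)b^p$, $a^p(\delta b)$, and $(C_p(a,b))\delta p$ in Definition~\ref{D:total-derivation} are interpreted via the $A$-module structure on $D_0$ induced by $f\circ\pi_0$, which is the structure implicit in the first coordinate of $U_c(D_0)$. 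Once these conventions are pinned down, each direction is a line-by-line comparison of the ring (and $R$-algebra) structure on $U_c(D_0)$ with the defining properties of a total $p$-derivation, with no genuine obstacle.
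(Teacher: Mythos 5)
Your proof is correct and is exactly the routine coordinate-by-coordinate verification that the paper leaves implicit (the proposition is stated there without proof): matching the addition, multiplication, unit, and $R$-algebra structure of $U_c(D_0)$ against the three axioms of a total $p$-derivation together with $\delta p=c$, and noting that any homomorphism lying over $f$ has this form. The only quibble is your parenthetical claim that the annihilator of $p$ in $R=W_2(\bk)$ is $pR$, which uses perfectness of $\bk$; but this well-definedness issue is already inherent in the paper's axiom~(3) and does not affect your argument.
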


\begin{lemma} Let $g\colon A\to B$ be formally smooth. Let $D_0$ be a $B_0$-algebra, with a $A_0$-algebra structure induced by $g$.
Then a total $p$-derivation $\delta_A$ of $A$ into $D_0$ has a lift $\delta_B$ of $B$ into $D_0$ (that is $\delta_A=\delta_B\circ g$). If, in addition, $g$ is formally unramified, this lift is unique.
\end{lemma}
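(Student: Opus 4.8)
The plan is to translate the statement, via the Proposition above, into a lifting problem for $R$-algebra homomorphisms and then invoke the definition of formal smoothness directly.

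First I would set $c=\delta_A p\in D_0$; since $g$ is an $R$-algebra homomorphism (so $g(p)=p$ in $B$), any lift $\delta_B$ with $\delta_A=\delta_B\circ g$ is forced to have $\delta_B p=c$. Write $g_0\colon A_0\to B_0$ for the reduction of $g$, let $f_B\colon B_0\to D_0$ be the structure map of the $B_0$-algebra $D_0$, and observe that the induced $A_0$-algebra structure is $f_A:=f_B\circ g_0$. By the Proposition, $\delta_A$ corresponds to the $R$-algebra homomorphism $(f_A,\delta_A)\colon A\to U_c(D_0)$, and constructing $\delta_B$ is the same as producing an $R$-algebra homomorphism $B\to U_c(D_0)$ whose first component is $f_B\circ\pi_0$ and whose composition with $g$ is $(f_A,\delta_A)$.

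Now I would use the projection $\pi\colon U_c(D_0)\to D_0$, $(x_0,x_1)\mapsto x_0$: as recorded in the excerpt its kernel $I$ satisfies $I^2=0$, so $\pi$ is a square-zero extension. Regard $U_c(D_0)$ as an $A$-algebra through $(f_A,\delta_A)$; then $\pi$ exhibits $D_0$ as an $A$-algebra quotient, and a short diagram chase — comparing $A\to A_0\xrightarrow{g_0}B_0\xrightarrow{f_B}D_0$ with $A\xrightarrow{g}B\xrightarrow{\pi_0}B_0\xrightarrow{f_B}D_0$ — shows that the square with top edge $(f_A,\delta_A)\colon A\to U_c(D_0)$, left edge $g\colon A\to B$, bottom edge $f_B\circ\pi_0\colon B\to D_0$, and right edge $\pi$ commutes. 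Formal smoothness of $g$ applied to this square furnishes an $A$-algebra homomorphism $\tilde g\colon B\to U_c(D_0)$ lifting $f_B\circ\pi_0$. Being $A$-linear, $\tilde g$ is automatically $R$-linear and satisfies $\tilde g\circ g=(f_A,\delta_A)$; its first component is $f_B\circ\pi_0$, so by the Proposition its second component is a total $p$-derivation $\delta_B$ of $B$ into $D_0$, and comparing second components in $\tilde g\circ g=(f_A,\delta_A)$ gives $\delta_A=\delta_B\circ g$.

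For the last sentence: if $g$ is moreover formally unramified, the lift in any square-zero extension problem is unique, so $\tilde g$ — and hence its second component $\delta_B$ — is uniquely determined. The only delicate point in all of this is bookkeeping: one must carefully keep track of the $A$- (and $R$-) algebra structure on $U_c(D_0)$ induced by $(f_A,\delta_A)$ so that ``$A$-algebra homomorphism lifting the reduction'' corresponds exactly to ``total $p$-derivation lifting $\delta_A$.'' Once the lifting square is set up correctly, there is no genuine obstacle beyond citing the definition of formal smoothness (resp.\ unramifiedness).
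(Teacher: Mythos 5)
Your proposal is correct and follows essentially the same route as the paper's proof: translate $\delta_A$ into the $R$-algebra homomorphism $(f_A,\delta_A)\colon A\to U_c(D_0)$ via the Proposition, view $U_c(D_0)\to D_0$ as a square-zero extension, and apply formal smoothness (resp.\ formal unramifiedness) of $g$ to the resulting lifting square. Your write-up simply makes explicit the bookkeeping (the forced value $\delta_B p=c$ and the $A$-algebra structures) that the paper leaves implicit.
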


\begin{proof}
Let $c=\delta_A(p)$. Then, the total $p$-derivativation $\delta_A$ induces a homomorphism $(f,\delta_A)\colon A\to U_c(D_0)$. Consider the commutative diagram
\[\xymatrix{
B\ar[r]^h\ar@{-->}[dr]&D_0\\
A\ar[u]^g\ar[r]&U_c(D_0).\ar[u]
}\]
The top horizontal arrow $h$ is the $B_0$-algebra structure, and the right vertical arrow is reduction. The dotted arrow exists by formal smoothness (and is unique if $f$ is formally unramified). It must be of the form $(h,\delta_B)$ for a total $p$-derivation $\delta_B$.
\end{proof}

The construction of $U_c$ is functorial: if $h\colon B_0\to C_0$ is a $A_0$-algebra homomorphism, then there is an induced homomorphism $h\colon U_c(B_0)\to U_{h(c)}(C_0)$. 

\subsection{Total $p$-differentials}

The module of {\em total $p$-differentials} $\Omega^{1,\tot}_A$ is the $A_0$-module generated by the symbols $d^{\tot}x$ for $x\in A$ subject to the relations
\begin{enumerate}
\item for $a,b\in A$, $d^{\tot}(a+b)=d^{\tot}a+d^{\tot}b+(C_p(a,b))d^{\tot}p$,
\item for $a,b\in A$, $d^{\tot}(ab)=(d^{\tot}a)b^p+a^p(d^{\tot}b)$, and
\item for $r\in R$,  $d^{\tot}r=((\phi(r)-r^p)/p) (d^{\tot} p).$
\end{enumerate}
Here, we treat $\Omega^{1,\tot}_A$ as an $A$-module with the $A$-action factoring through the reduction $A\to A_0$
There is a natural total $p$-derivation $\delta\colon A\to \Omega^{1,\tot}_A$ that satisfies the following universality property: for any total $p$-derivation $\delta'\colon A\to D_0$, there is a unique $A_0$-module homomorphism $g\colon \Omega_A^{1,\tot}\to D_0$ such that $\delta'=g\circ \delta$. 

\begin{example} \label{ex:affinespace} For $A=R[x_1,\dots,x_n]$, a free polynomial algebra, $\Omega_A^{1,\tot}$ is a free $A_0$-module of rank $n+1$, generated by $d^{\tot} p,d^{\tot}x_1,\dots,d^{\tot}x_n$.
\end{example}

%\begin{lemma} We have the equality for completions at a maximal ideal $\fm$ of $A$ whose restriction to $A_0$ is $\fm_0$ with respective localizations $\hat{A},\hat{A_0}$:
%\[\Omega^{1,\tot}_{\hat A}=\Omega^{1,\tot}_A\otimes \hat{A_0}.\]
%\end{lemma}

%\begin{proof}
%\eric{This should be done by producing a derivation, can imitate classical proof.}
%\end{proof}

There are pullbacks for total $p$-differentials: given an $R$-algebra homomorphism $f\colon A\to B$, there is a well-defined homomorphism of $B_0$-modules, 
\[\Omega_A^{1,\tot}\otimes B_0\to \Omega_B^{1,\tot}\]
taking $d^{\tot}a\otimes b_0\mapsto b_0d^{\tot}(f(a)).$

For an $A_0$-module $M_0$, we will consider $F_{A_0}^*M$ to be the Frobenius tensor product of $M$: $F_{A_0}^*M_0$ is $M_0\otimes_{A_0} A_0$ where $A_0$ is given an $A_0$-action twisted by Frobenius (for $r\in A_0, x\in A_0$, $r\cdot x=r^px.$)
The quotient of $\Omega_A^{1,\tot}$ by $d^{\tot}p$ is $F_{A_0}^*\Omega^1_{A_0}$.
There is a natural $A_0$-module homomorphism $\alpha\colon A_0\to \Omega_A^{1,\tot}$ given by $\alpha(x)=x d^{\tot} p$. 
 A left-inverse of $\alpha$ is a homomorphism $h\colon \Omega_A^{1,\tot}\to A_0$ with $h(d^{\tot}p)=1$. 
 
We will relate left-inverses of $\alpha$ to lifts of the absolute Frobenius. Here, a lift of absolute Frobenius is a ring homomorphism $\phi\colon A\to A$ such that $\phi$ restricts to the Frobenius on $R$ and makes the following diagram commute
\[\xymatrix{
A\ar[r]^\phi\ar[d]_{\pi_0}&A\ar[d]^{\pi_0}\\
A_0\ar[r]^{x\mapsto x^p}& A_0
}\]

\begin{remark}
In much of the literature, one lifts the relative Frobenius which is a $\bk$-linear homomorphism $A_0\to A_0^{(p)}$. Here, $A_0^{(p)}=A_0\otimes_{\bk}\bk$ (with the $\bk$-algebra structure on $\bk$ being given by $r\cdot x=r^p x$) so for $c\in\bk$, $cx\otimes 1=x\otimes c^p$. Lifts of relative Frobenius are in bijective correspondence with lifts of absolute Frobenius. Indeed, one has a commutative diagram
\[\xymatrix{
A_0&\ar[l]_>>>>>>F A_0^{(p)}&\ar[l]_>>>>>G\ar@/_1.5pc/[ll]_{F_{A_0}} A_0\\
& \bk\ar[u]&\ar[l]_{x^p\mapsfrom x}\bk\ar[u]
}\]
where $F$ is the relative Frobenius given by $x\otimes c\mapsto cx^p$ and $G$ is the structure homomorphism given by $x\mapsto x\otimes 1$. Similarly, we may form $A^{(\phi)}=A\otimes_R R$ where $R$ is given an $R$-algebra structure by $r\cdot x=\phi(r)x$. A lift of relative Frobenius is an $R$-homomorphism $\Phi\colon A\to A^{(\phi)}$ reducing to $F$.  Throughout this paper, except in remarks, we will use absolute Frobenius which will be denoted by subscripted $F$. 
\end{remark}

\begin{lemma} \label{l:locallifts} Left-inverses $h\colon \Omega_A^{1,\tot}\to A_0$ of $\alpha$ are in bijective correspondence with lifts of Frobenius on $A$ where the correspondence is given by
\[h\mapsto [\phi_h\colon x\mapsto x^p+ph(d^{\tot}x)].\]
\end{lemma}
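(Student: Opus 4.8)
The plan is to factor the claimed bijection through the universal property of $\Omega^{1,\tot}_A$. That property identifies $A_0$-linear maps $h\colon\Omega^{1,\tot}_A\to A_0$ with total $p$-derivations $\delta_h:=h\circ d^{\tot}\colon A\to A_0$, and under this identification $h$ is a left-inverse of $\alpha$ precisely when $\delta_h(p)=h(d^{\tot}p)=1$ (since $h$ is $A_0$-linear and $\alpha(x)=x\,d^{\tot}p$). So it suffices to build a bijection between total $p$-derivations $\delta\colon A\to A_0$ with $\delta(p)=1$ and lifts of Frobenius $\phi\colon A\to A$, sending $\delta$ to $\phi_\delta\colon x\mapsto x^p+p\,\delta(x)$; the formula in the statement is then the case $\delta=\delta_h$. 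For the notation: since $A$ is flat over $R=W_2(\bk)$ and $p^2=0$ in $A$, multiplication by $p$ induces an isomorphism of $A_0$-modules $A_0\xrightarrow{\ \sim\ }pA$, and I write $p\,\bar c\in pA\subseteq A$ for the image of $\bar c\in A_0$ (equivalently $p\tilde c$ for any lift $\tilde c\in A$, well defined because $p\cdot pA=p^2A=0$) and $y/p\in A_0$ for the preimage of $y\in pA$. With these conventions $\phi_\delta(x)=x^p+p\,\delta(x)$ is an honest element of $A$ reducing to $x^p$ mod $p$, so the Frobenius square in the definition of a lift commutes automatically.

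First I would check that $\phi_\delta$ is a lift of Frobenius, i.e. a ring homomorphism restricting to $\phi$ on $R$. Additivity comes from relation (1) of Definition~\ref{D:total-derivation} together with $\delta(p)=1$ and the polynomial identity $p\,C_p(a,b)=a^p+b^p-(a+b)^p$ in $A$: expanding $\phi_\delta(a+b)$, the term $(a+b)^p$ cancels against $-p\,C_p(a,b)$ and leaves $a^p+b^p+p\,\delta(a)+p\,\delta(b)$. Multiplicativity comes from relation (2) together with the observation that the cross term $(p\,\delta(a))(p\,\delta(b))$ lies in $p^2A=0$, so $\phi_\delta(a)\phi_\delta(b)=a^pb^p+a^p\,p\,\delta(b)+b^p\,p\,\delta(a)=\phi_\delta(ab)$; and $\phi_\delta(1)=1$ since $\delta(1)=0$ (apply relation (2) with $a=b=1$ to get $\delta(1)=2\delta(1)$). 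Finally relation (3) and $\delta(p)=1$ give $\delta(r)=(\phi(r)-r^p)/p$ for $r\in R$, whence $p\,\delta(r)=\phi(r)-r^p$ in $A$ (using $\phi(r)-r^p\in pR$, a standard property of the Frobenius on $W_2(\bk)$), so $\phi_\delta(r)=\phi(r)$.

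Next I would run the same identities backwards for the converse. Given a lift of Frobenius $\phi$, the commuting square forces $\phi(x)-x^p\in pA$ for all $x$, so $\delta_\phi(x):=(\phi(x)-x^p)/p\in A_0$ is defined; additivity of $\phi$ with $p\,C_p(a,b)=a^p+b^p-(a+b)^p$ gives relation (1), multiplicativity of $\phi$ after discarding the cross term in $p^2A=0$ gives relation (2), the hypothesis that $\phi$ restricts to the Frobenius on $R$ gives relation (3), and $\delta_\phi(p)=(\phi(p)-p^p)/p=p/p=1$ since $\phi(p)=p$ and $p^p=0$. Thus $\delta_\phi$ is a total $p$-derivation with $\delta_\phi(p)=1$, hence $\delta_\phi=h_\phi\circ d^{\tot}$ for a unique $A_0$-linear $h_\phi$, which is a left-inverse of $\alpha$. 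That $\delta\mapsto\phi_\delta$ and $\phi\mapsto\delta_\phi$ are mutually inverse is then immediate from the isomorphism $A_0\cong pA$: $\delta_{\phi_\delta}(x)=(\phi_\delta(x)-x^p)/p=(p\,\delta(x))/p=\delta(x)$, and $\phi_{\delta_\phi}(x)=x^p+p\,\delta_\phi(x)=x^p+(\phi(x)-x^p)=\phi(x)$. Transporting back through the universal property (and using that the $d^{\tot}x$ generate $\Omega^{1,\tot}_A$ over $A_0$, so $h$ is determined by $\delta_h(x)=h(d^{\tot}x)$) yields the bijection of the lemma with $\phi_h(x)=x^p+p\,h(d^{\tot}x)$.

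The only genuine input beyond mechanical verification is the flatness fact that multiplication by $p$ identifies $A_0$ with $pA$, so that $p\cdot(-)$ and $(-)/p$ are honest mutually inverse operations; the rest is bookkeeping driven by the identity $p\,C_p(X,Y)=X^p+Y^p-(X+Y)^p$ and the vanishing of $p^2A$, which together are exactly what convert "$\phi$ is a ring homomorphism" into relations (1)--(3). As an alternative one could combine the universal property of $\Omega^{1,\tot}_A$ with the Proposition identifying total $p$-derivations $\delta$ with $\delta(p)=1$ with $R$-algebra maps $A\to U_1(A_0)=W_2(A_0)$ lifting $\pi_0$, and then match such sections of $W_2(A_0)\to A_0$ with Frobenius lifts by the same formulas; but the direct argument above is self-contained.
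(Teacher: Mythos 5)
Your proof is correct and follows essentially the same route as the paper: define $\phi_h(x)=x^p+p\,h(d^{\tot}x)$ and verify it is a ring homomorphism via relations (1)--(3) and $p^2A=0$, and conversely set $h(x)=(\phi(x)-x^p)/p$ (well defined by flatness) and check it is a total $p$-derivation with value $1$ on $p$, factoring through $d^{\tot}$ by the universal property. Your version is just slightly more explicit than the paper's about the identification $A_0\cong pA$, the reduction to derivations with $\delta(p)=1$, and the mutual-inverse check, which the paper leaves as ``reversing the above arguments.''
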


\begin{proof}
Let $h$ be a splitting. Define $\phi_h\colon A\to A$ by $\phi_h(x)=x^p+ph(d^{\tot}x)$. We observe that
\begin{eqnarray*}
\phi_h(x+y)&=&(x+y)^p+ph(d^{\tot}(x+y))\\
&=&(x+y)^p+ph(d^{\tot}x+d^{\tot}y+C_p(x,y)d^{\tot}p)\\
&=&x^p+y^p+ph(d^{\tot}x+d^{\tot}y)\\
&=&\phi_h(x)+\phi_h(y)
\end{eqnarray*}
and that
\begin{eqnarray*}
\phi_h(xy)&=&(xy)^p+ph(d^{\tot}(xy))\\
&=&(xy)^p+ph(x^p d^{\tot}y+y^p d^{\tot}x)\\
&=&(x^p+ph(d^{\tot}x))(y^p+ph(d^{\tot}y))\\
&=&\phi_h(x)\phi_h(y).
\end{eqnarray*}

If $\phi\colon A\to A$ is a lift of Frobenius, then we define $h\colon A\to A_0$ by $h(x)=(\phi(x)-x^p)/p$ (which is well-defined because $A$ is a flat $R$-algebra). We claim that $h$ factors through $d^{\tot}\colon A\to \Omega^{1,\tot}$. This follows from reversing the above arguments and noting that $\phi$ restricts to the usual Frobenius on $R$. \end{proof}

\begin{lemma} \label{l:fundamental} Let $A$ be a smooth $R$-algebra. Then, 
there is an exact sequence of $A_0$-modules,
\[\xymatrix{
0\ar[r]&A_0\ar[r]^>>>>>\alpha&\Omega_A^{1,\tot}\ar[r]^>>>>>\beta&F_{A_0}^*\Omega^1_{A_0}\ar[r]&0.
}\]
\end{lemma}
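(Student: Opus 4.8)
The plan is to reduce the entire statement to the single assertion that $\alpha$ is injective. By construction $\beta$ is the quotient map of $\Omega^{1,\tot}_A$ by the $A_0$-submodule generated by $d^{\tot}p$, and this quotient is identified with $F_{A_0}^*\Omega^1_{A_0}$ as recorded just before the statement: modulo $d^{\tot}p$, relations (1)--(3) collapse to additivity of $d^{\tot}$, the Frobenius-twisted Leibniz rule $d^{\tot}(ab)=(d^{\tot}a)b^p+a^p(d^{\tot}b)$, and $d^{\tot}|_R=0$, which is exactly a presentation of $F_{A_0}^*\Omega^1_{A_0}$. Hence $\beta$ is surjective. Since $\alpha(1)=d^{\tot}p$, the image of $\alpha$ is the submodule $A_0\cdot d^{\tot}p=\ker\beta$, so exactness at the middle term is automatic. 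The only assertion with content is therefore that $\alpha$ is injective, i.e. that $x\,d^{\tot}p=0$ in $\Omega^{1,\tot}_A$ forces $x=0$ in $A_0$.

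To prove this it suffices to exhibit a left inverse $h\colon\Omega^{1,\tot}_A\to A_0$ of $\alpha$, which by the universal property of $\Omega^{1,\tot}_A$ is the same as a total $p$-derivation $\delta\colon A\to A_0$ with $\delta p=1$. By Lemma~\ref{l:locallifts}, such an $h$ is the same datum as a lift of absolute Frobenius $\phi\colon A\to A$, so the crux is that every smooth $R$-algebra admits a lift of absolute Frobenius. This is where smoothness, hence formal smoothness, of $A/R$ is used, and it is the main point. Let $A'$ denote the ring $A$ viewed as an $R$-algebra via $R\xrightarrow{\phi}R\to A$. Since $p^2=0$ in $R$ (and $\phi(p)=p$), the ideal $pA'\subset A'$ is square-zero and $A'/pA'=A_0$ with $R$ acting through $r\mapsto\bar r^{\,p}$, because the Witt-vector Frobenius reduces to the $p$-power map on $\bk$. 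The composite $F_{A_0}\circ\pi_0\colon A\to A_0$ is a ring homomorphism, and it is $R$-linear as a map into $A'/pA'$ since $F_{A_0}(\pi_0(r\cdot 1))=\bar r^{\,p}$. Formal smoothness of $A$ over $R$ then lifts $F_{A_0}\circ\pi_0$ to an $R$-algebra homomorphism $A\to A'$, i.e. to a ring homomorphism $\phi\colon A\to A$ that restricts to Frobenius on $R$ and reduces to $x\mapsto x^p$ on $A_0$ — a lift of absolute Frobenius. Plugging this $\phi$ into Lemma~\ref{l:locallifts} produces $h$ with $h(d^{\tot}p)=1$, so $\alpha$ is injective (and in fact $h$ splits the sequence).

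Equivalently, and perhaps more directly, one can construct $\delta$ itself: by the Proposition identifying total $p$-derivations with $\delta p=1$ with $R$-algebra maps into $U_1(A_0)=W_2(A_0)$, it is enough to lift the reduction $\pi_0\colon A\to A_0$ along the square-zero $R$-algebra surjection $W_2(A_0)\to A_0$, which again exists by formal smoothness; then $\delta$ factors through $d^{\tot}$ and gives $h$. Either way, the only genuine obstacle is this single lifting step, which rests entirely on formal smoothness of $A$ over $R$; everything else — realizing $F_{A_0}^*\Omega^1_{A_0}$ as the quotient by $d^{\tot}p$, the tautological exactness in the middle, and the check that the constructed $h$ is a left inverse of $\alpha$ — is bookkeeping against the defining relations.
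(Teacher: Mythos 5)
Your proposal is correct and follows essentially the same route as the paper: reduce everything to injectivity of $\alpha$, then produce a left inverse via Lemma~\ref{l:locallifts} from a lift of Frobenius guaranteed by smoothness in the affine setting. The only difference is that you spell out the deformation-theoretic lifting argument (lifting $F_{A_0}\circ\pi_0$ along the square-zero extension, or equivalently lifting $\pi_0$ along $W_2(A_0)\to A_0$) that the paper's one-line proof leaves implicit, and this filled-in detail is accurate.
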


\begin{proof}
We only need to show that $\alpha$ is an injection. Because we are in the affine setting, there is a lift of Frobenius on $A$ by smoothness, and therefore a left-inverse of $\alpha$.
\end{proof}

\subsection{Globalization}

The above constructions globalize by gluing. 
Recall that on an open affine $U_0=\Spec A_0\subset X_0$, if the quasicoherent sheaf $\cF$  corresponds to a module $M$, the pullback by absolute Frobenius $F_{X_0}^*\cF$ on $U$ corresponds to $F_{A_0}^*M$.
Consequently, for $X$ smooth over $R$, the short exact sequence of Lemma~\ref{l:fundamental} globalizes to a sequence of sheaves of $\cO_{X_0}$-modules, the {\em fundamental exact sequence of total $p$-differentials}:
\begin{equation}
\xymatrix{
0\ar[r] &\cO_{X_0}\ar[r]^>>>>>\alpha&\Omega_X^{1,\tot}\ar[r]^\beta&F_{X_0}^*\Omega^1_{X_0}\ar[r]&0}
\label{e:fundamental}
\end{equation}
This exact sequence is an imprecise analogue of the first exact sequence of K\"{a}hler differentials \cite[Prop~8.3A]{Hartshorne:AG}, albeit with the differentials twisted by Frobenius. If we were to imagine that there is a sequence of projections $X\to \Spec R \to\Spec \bk$: the non-existent $``\Omega^1_{R/\bk}"$ would be the sheaf $R_0$ which pulls back to $\cO_{X_0}$.

The existence of the locally split short exact sequence immediately yields the following.
\begin{proposition} If $X$ is smooth over $R$, then $\Omega_X^{1,\tot}$ is locally free.
\end{proposition}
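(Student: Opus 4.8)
The plan is to read off local freeness directly from the fundamental exact sequence \eqref{e:fundamental}, using that it is locally split: an extension of a locally free sheaf by a locally free sheaf is itself locally free, so it suffices to check that the two outer terms of \eqref{e:fundamental} are locally free of finite rank and that the sequence splits locally.

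First I would note that since $X$ is smooth over $R=W_2(\bk)$, the reduction $X_0$ is smooth over $\bk$, hence $\Omega^1_{X_0}$ is a locally free $\cO_{X_0}$-module of finite rank. The structure sheaf $\cO_{X_0}$ is free of rank $1$, and $F_{X_0}^*\Omega^1_{X_0}$ is locally free of the same rank as $\Omega^1_{X_0}$: on an open affine $U_0=\Spec A_0$ over which $\Omega^1_{A_0}$ is free, $F_{A_0}^*\Omega^1_{A_0}=\Omega^1_{A_0}\otimes_{A_0}A_0$ (with $A_0$ acting on the right-hand factor through Frobenius) is again free over $A_0$, since base change of a free module along a ring endomorphism is free. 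Thus in \eqref{e:fundamental} both the sub- and the quotient-sheaf are locally free of finite rank.

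Next I would establish that \eqref{e:fundamental} is locally split. Given a point of $X_0$, choose an open affine $U=\Spec A\subset X$ with $A$ a smooth $R$-algebra; such $U$ form a basis for the topology. By smoothness $A$ admits a lift of Frobenius, so by Lemma~\ref{l:locallifts} there is a left-inverse $h\colon\Omega_A^{1,\tot}\to A_0$ of $\alpha$, which splits the sequence of Lemma~\ref{l:fundamental} over $U_0$. Hence over $U_0$ we obtain $\Omega_A^{1,\tot}\cong A_0\oplus F_{A_0}^*\Omega^1_{A_0}$, and after shrinking $U_0$ further so that $\Omega^1_{A_0}$ is free, the right-hand side is a free $A_0$-module (of rank $1+n$ when $X_0$ has pure dimension $n$, compare Example~\ref{ex:affinespace}). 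Since such $U_0$ cover $X_0$, the sheaf $\Omega_X^{1,\tot}$ is locally free.

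There is no serious obstacle here; the only points requiring a little care are arranging a single open on which $X$ is smooth over $R$, a lift of Frobenius exists, and $\Omega^1_{X_0}$ is free simultaneously — achieved by intersecting the relevant opens — and the elementary observation that Frobenius pullback carries (locally) free sheaves to (locally) free sheaves, which is immediate from a presentation by generators and relations.
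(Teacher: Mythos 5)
Your proof is correct and follows exactly the route the paper intends: the fundamental exact sequence \eqref{e:fundamental} has locally free outer terms and splits locally (a lift of Frobenius on a smooth affine piece gives a left-inverse of $\alpha$ via Lemma~\ref{l:locallifts}), so the middle term is locally free. The paper states this in one line; your write-up just supplies the details.
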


Given a morphism $f\colon X\to Y$ over $\Spec R$, there is a morphism of sheaves on $X_0$, $f^*\Omega^{1,\tot}_Y\to \Omega^{1,\tot}_X$.

\begin{lemma} \label{lem:etale} Let $f\colon X\to Y$ be a smooth morphism of smooth schemes over $R$. Then $f$ induces an injection $f^*\Omega_Y^{1,\tot}\to \Omega_X^{1,\tot}$. If, in addition, $f$ is \'{e}tale, then $f$ is an isomorphism.
\end{lemma}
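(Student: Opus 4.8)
The plan is to read off both claims from the fundamental exact sequence~\eqref{e:fundamental} for $X$ and for $Y$, combined with the first exact sequence of K\"ahler differentials for the reduction $f_0\colon X_0\to Y_0$ of $f$, by a short diagram chase. Since smoothness and \'etaleness are stable under the base change $-\times_R\bk$, the morphism $f_0$ is smooth (resp.\ \'etale); and since $X$ and $Y$ are smooth over $R$, all sheaves appearing below are locally free.

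First I would assemble a commutative ladder of short exact sequences. Applying $f_0^*$ to~\eqref{e:fundamental} for $Y$ preserves exactness because all three terms are locally free, and the naturality of the absolute Frobenius, $f_0\circ F_{X_0}=F_{Y_0}\circ f_0$, gives $F_{X_0}^*f_0^*=f_0^*F_{Y_0}^*$; this yields the top row of
\[
\xymatrix{
0\ar[r]&\cO_{X_0}\ar[r]\ar@{=}[d]&f_0^*\Omega_Y^{1,\tot}\ar[r]\ar[d]&F_{X_0}^*f_0^*\Omega^1_{Y_0}\ar[r]\ar[d]&0\\
0\ar[r]&\cO_{X_0}\ar[r]^>>>>>\alpha&\Omega_X^{1,\tot}\ar[r]^>>>>>\beta&F_{X_0}^*\Omega^1_{X_0}\ar[r]&0,
}
\]
whose bottom row is~\eqref{e:fundamental} for $X$; the middle vertical map is the pullback $f^*\Omega^{1,\tot}_Y\to\Omega^{1,\tot}_X$, and the right vertical map is $F_{X_0}^*$ applied to the ordinary pullback $f_0^*\Omega^1_{Y_0}\to\Omega^1_{X_0}$. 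The left square commutes with the identity because $f$ is a morphism over $\Spec R$, so it fixes $p$ and the pullback sends $d^{\tot}p\otimes 1\mapsto d^{\tot}p$; the right square commutes because the pullback on total differentials is compatible with passing to the quotient by $d^{\tot}p$, where it induces exactly the Frobenius twist of the usual pullback on $\Omega^1$.

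Next I would invoke the first exact sequence of K\"ahler differentials for the smooth morphism $f_0$: it is a \emph{locally split} short exact sequence $0\to f_0^*\Omega^1_{Y_0}\to\Omega^1_{X_0}\to\Omega^1_{X_0/Y_0}\to 0$ with locally free terms, so $f_0^*\Omega^1_{Y_0}\to\Omega^1_{X_0}$ is a (locally split) injection and remains injective after applying $F_{X_0}^*$. Hence in the ladder the right vertical arrow is injective and the left one is an isomorphism, and a short diagram chase (the sharp four lemma) forces the middle arrow to be injective, which is the first claim. If moreover $f$ is \'etale, then $f_0$ is \'etale, so $f_0^*\Omega^1_{Y_0}\to\Omega^1_{X_0}$ is an isomorphism, whence so is the right vertical arrow, and the five lemma makes the middle arrow an isomorphism.

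The only point that needs care -- there is no serious obstacle -- is the commutativity asserted in the ladder: that the pullback on total differentials restricts to the identity on the $\cO_{X_0}$-summands and induces precisely the Frobenius-twisted pullback of $\Omega^1$ on the quotients. This is verified affine-locally from the generators-and-relations presentation of $\Omega^{1,\tot}$ together with the formula $d^{\tot}a\otimes b_0\mapsto b_0\,d^{\tot}(f(a))$ for the pullback; everything else is formal, using only the standard behaviour of $\Omega^1$ under smooth and \'etale morphisms and the local splitness of the sequences in play.
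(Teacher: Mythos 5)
Your proof is correct and follows essentially the same route as the paper: the paper's argument is precisely the ladder comparing the fundamental exact sequence for $X$ with the pullback of the one for $Y$, observing that the left vertical map is an isomorphism and the right vertical map (the Frobenius-twisted pullback on $\Omega^1$) is injective for $f$ smooth and an isomorphism for $f$ \'etale. Your additional checks (exactness of the pulled-back row via local freeness, injectivity surviving $F_{X_0}^*$ via local splitness, and commutativity of the squares) simply make explicit what the paper leaves implicit.
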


\begin{proof}
We have a commutative diagram of locally free sheaves with exact rows:
\[\xymatrix{
0\ar[r] &\cO_{X_0}\ar[r]^\alpha&\Omega_X^{1,\tot}\ar[r]^\beta&F_{X_0}^*\Omega^1_{X_0}\ar[r]&0\\
0\ar[r] &f^*\cO_{Y_0}\ar[r]^\alpha\ar[u]&f^*\Omega_Y^{1,\tot}\ar[r]^\beta\ar[u]&f^*F_{Y_0}^*\Omega^1_{Y_0}\ar[r]\ar[u]&0.}\]
The left vertical arrow is an isomorphism while the right vertical arrow is injective (and an isomorphism if $f$ is \'{e}tale).
\end{proof}

By globalizing Lemma~\ref{l:locallifts}, we obtain the following:

\begin{proposition} Splittings of the fundamental exact sequence (\ref{e:fundamental}) are in bijective correspondence with lifts of Frobenius. The set of lifts of Frobenius is a torsor over $H^0(X_0,\HHom(F_{X_0}^*\Omega^1_{X_0},\cO_{X_0})).$
\end{proposition}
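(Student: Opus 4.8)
The plan is to bootstrap from the local statement (Lemma~\ref{l:locallifts}) using the globalization machinery already set up. First I would observe that a splitting of the fundamental exact sequence~\eqref{e:fundamental} is, by definition, a left-inverse $h\colon \Omega_X^{1,\tot}\to \cO_{X_0}$ of $\alpha$, i.e.\ an $\cO_{X_0}$-module homomorphism with $h\circ\alpha=\id$, equivalently an element of $H^0(X_0,\HHom(\Omega_X^{1,\tot},\cO_{X_0}))$ restricting to $1$ on the $\cO_{X_0}$-summand. On each open affine $U_0=\Spec A_0\subset X_0$ with $U=\Spec A$, Lemma~\ref{l:locallifts} gives a bijection between such left-inverses over $U$ and lifts of Frobenius on $A$, via $h\mapsto\phi_h$ where $\phi_h(x)=x^p+ph(d^{\tot}x)$. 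The key point to check is that this correspondence is compatible with restriction to smaller opens, so that it sheafifies: if $V\subset U$ is a distinguished open, the pullback map on total $p$-differentials (described before Lemma~\ref{lem:etale}) together with the formula for $\phi_h$ shows that $\phi_h|_V=\phi_{h|_V}$, since both are determined by $x\mapsto x^p+p\,h(d^{\tot}x)$ and the pullback sends $d^{\tot}x$ to $d^{\tot}x$. Hence local splittings glue to a global splitting exactly when the corresponding local Frobenius lifts agree on overlaps, which is the same gluing condition, giving the claimed bijection on global sections.

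Next I would address the torsor statement. The set of $\cO_{X_0}$-module splittings of a locally split short exact sequence $0\to\cO_{X_0}\to\Omega_X^{1,\tot}\to F_{X_0}^*\Omega^1_{X_0}\to 0$ is, when nonempty, a torsor under $\Hom_{\cO_{X_0}}(F_{X_0}^*\Omega^1_{X_0},\cO_{X_0})=H^0(X_0,\HHom(F_{X_0}^*\Omega^1_{X_0},\cO_{X_0}))$: given two splittings $h,h'$, the difference $h-h'$ kills $\alpha(\cO_{X_0})$, hence factors through $\beta$ to give an element of that Hom-group, and conversely any such element added to a splitting yields another splitting. Transporting this torsor structure across the bijection of the previous paragraph endows the set of lifts of Frobenius with the structure of a torsor under $H^0(X_0,\HHom(F_{X_0}^*\Omega^1_{X_0},\cO_{X_0}))$. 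Concretely, if $\phi,\phi'$ are two lifts of Frobenius then $x\mapsto(\phi(x)-\phi'(x))/p$ defines, after the usual check that it is well-defined and factors through $F_{X_0}^*\Omega^1_{X_0}$, the corresponding global section.

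Finally I would note that the torsor is nonempty locally: since $X$ is smooth over $R$, every point has an affine open neighborhood over which a lift of Frobenius exists (as used in the proof of Lemma~\ref{l:fundamental}), so the sheaf of local splittings is a (possibly trivial, but always locally nonempty) torsor under the quasicoherent sheaf $\HHom(F_{X_0}^*\Omega^1_{X_0},\cO_{X_0})$; the statement only asserts a torsor structure on the set of global lifts, which is exactly what the above yields. The main obstacle, such as it is, is purely bookkeeping: verifying that the bijection of Lemma~\ref{l:locallifts} is natural in $A$ with respect to the pullback maps on $\Omega^{1,\tot}$, so that the purely local statement genuinely sheafifies; once that naturality is in hand, both assertions follow formally from the general nonsense about splittings of locally split exact sequences.
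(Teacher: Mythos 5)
Your proposal is correct and follows essentially the same route as the paper's proof: restrict to an affine cover, apply Lemma~\ref{l:locallifts}, glue the resulting local Frobenius lifts (resp.\ local splittings), and observe that two splittings differ by a global section of $\HHom(F_{X_0}^*\Omega^1_{X_0},\cO_{X_0})$, which transports to the torsor structure on lifts. Your added checks of naturality under restriction and the explicit formula $(\phi-\phi')/p$ for the difference of two lifts are just a more detailed spelling-out of the same argument.
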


\begin{proof}
A splitting of the fundamental exact sequence locally gives lifts of Frobenius which glue into a global lift.
Any two splittings differ by a global section of $\HHom(F_{X_0}^*\Omega^1_{X_0},\cO_{X_0})$. \end{proof}

\subsection{Total jet spaces}

Total $p$-differentials were originally developed by the authors to recontextualize some arguments of Buium \cite{buium:p-adic-jets}. Specifically, given a scheme $X$ over $W(\bk)$, Buium introduced the $p$-jet space $X^1$. Its reduction to $\bk$, $X_0^1$ is a torsor for the Frobenius tangent space (that is, the pullback of the tangent space of $X_0$ by absolute Frobenius) and is related to the Greenberg transform. In \cite[Prop~1.10]{buium:p-adic-jets}, Buium considers the projective completion of $X_0^1$ (considered as an affine torsor). By making use of the fundamental exact sequence,
%\[\xymatrix{
%0\ar[r] &\cO_{X_0}\ar[r]^>>>>>\alpha&\Omega_X^{1,\tot}\ar[r]^>>>>>\beta&F_{X_0}^*\Omega^1_{X_0}\ar[r]&0}\]
one can see that the projectivization of $\Omega_X^{1,\tot}$ is manifestly a projective completion of the dual to $F_{X_0}^*\Omega^1_{X_0}$. This gives a useful functorial understanding of such completions.

%\section{Deformation theory}
%
%We should relate the short exact sequence to the exact sequence coming from thickening $X_0$ to $X$. Lifts from $\bk$ to $R$ are classified by $\Ext^1(\Omega^1X,\pi^*I)$. We should be able to relate that class to the one in $\Ext^1(\Omega^{1,F},\pi^*I)$. They should be related by a Frobenius twist on the differentials and a $p$th power on $\pi^*I$. The main tool will be commutative diagram of sheaves of $X_0$,
%\[\xymatrix{
%0\ar[r]&\pi^*I\ar[r]\ar[d]&i^*\cO_X\ar[r]\ar[d]^{d^{\tot}}&\cO_{X_0}\ar[r]\ar[d]^{d^F}&0\\
%0\ar[r]&\pi^*I\ar[r]^\alpha&\Omega_X^{1,\tot}\ar[r]^\beta&\Omega_{X_0}^{1,F}\ar[r]&0.
%}\]

\subsection{Birings}

In  \cite{borger2005plethystic}, Borger and Weiland make use of the formalism of birings, which are ring objects in the category of rings, to put certain universal constructions in algebra on equal footing. One has two subcategories of rings $\cR_1,\cR_2$ and wishes to study functors $F\colon \cR_1\to \cR_2$ between them. The example to keep in mind is the formation of Witt vectors from the category of $\Z/p$-algebras to the category of $\Z/p^2$-algebras. In certain cases, the functor will be representable by a biring. That is, there is a ring $Q$ equipped with the data of coaddition $\Delta^+\colon Q\to Q\otimes Q$ and comultiplication $\Delta^{\times}\colon Q\to Q\otimes Q$ along with additive and multiplicative counits and additive antipodes such that for $A\in\cR_1$,
\[F(A)=\Hom(Q,A)\]
where $\Hom$ denotes ring homomorphisms. The coaddition and comultiplication induce addition and multiplication on $F(A)$.
One may further consider categories $\cR_i$ of algebras over fields $\bk_i$ and then incorporate the data for algebra structures.

We will describe the construction in terms of $\Z/p$ and $\Z/p^2$ knowing that they can be replaced by $\bk$ and $W_2(\bk)$ for a characteristic $p$ field $k$. For $c\in \Z/p$, we will consider the Witt interpolation $U_c$ as a functor from $\Z/p$-algebras to $\Z/p^2$-algebras.
We define 
\[ Q_c = (\Z/p)[e,\eta] \]
with the rules 
 \begin{eqnarray*}
 \Delta^+(e)&=& e\otimes 1+1\otimes e\\
 \Delta^+(\eta) &=& \eta \otimes 1 + 1 \otimes \eta - c \sum_{j=1}^{p-1} \frac{1}{p} {p \choose j} e^{p-j}\otimes e^j \\
 \Delta^{\times}(e)&=&e\otimes e\\
 \Delta^{\times}(\eta) &=& e^p\otimes \eta + \eta \otimes e^p %\\
% \varepsilon^{\times}(\delta) &=& 1 \\
 %\varepsilon^{+}(\delta) &=& 0
 \end{eqnarray*}
 with the other structures defined naturally. It is straightforward verification to show the following:
 \begin{lemma}
 For any $k=\Z/p$-algebra $D_0$, there is an isomorphism:
 \begin{eqnarray*}
\Hom_{\Z/p}(Q_c,D_0)&\rightarrow&U_c(D_0)\\
f&\mapsto&(x_0,x_1)=(f(e),f(\eta)).
\end{eqnarray*}
\end{lemma}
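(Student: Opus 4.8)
The plan is to check that the stated bijection of underlying sets intertwines the two ring structures. Since $Q_c=(\Z/p)[e,\eta]$ is a free polynomial algebra on the two variables $e,\eta$, a $\Z/p$-algebra homomorphism $f\colon Q_c\to D_0$ is nothing more than an arbitrary choice of a pair $(f(e),f(\eta))\in D_0\times D_0$; thus $f\mapsto (f(e),f(\eta))$ is manifestly a bijection onto the underlying set $D_0\times D_0$ of $U_c(D_0)$. It then remains only to verify that the addition and multiplication on $\Hom_{\Z/p}(Q_c,D_0)$ induced by the coaddition $\Delta^+$ and the comultiplication $\Delta^\times$ agree, under this bijection, with the operations defining $U_c(D_0)$, and likewise for the units; once this is done, since $U_c(D_0)$ is a ring the transported structure makes $\Hom_{\Z/p}(Q_c,D_0)$ a ring and the map a ring isomorphism.

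For addition, recall that the sum of $f,g\in\Hom_{\Z/p}(Q_c,D_0)$ is $m_{D_0}\circ(f\otimes g)\circ\Delta^+$, where $m_{D_0}$ denotes multiplication in $D_0$. Evaluating on $e$ gives $f(e)+g(e)$, and evaluating on $\eta$ gives $f(\eta)+g(\eta)-c\sum_{j=1}^{p-1}\frac1p\binom pj f(e)^{p-j}g(e)^j$. The one genuinely non-formal point is to identify this last sum: expanding $(X+Y)^p$ by the binomial theorem shows, as polynomials with integer coefficients,
\[ C_p(X,Y)=\frac{X^p+Y^p-(X+Y)^p}{p}=-\sum_{j=1}^{p-1}\frac1p\binom pj X^{p-j}Y^j, \]
so that the $\eta$-coordinate of the sum equals $f(\eta)+g(\eta)+cC_p(f(e),g(e))$, which is exactly the second coordinate of $(f(e),f(\eta))+(g(e),g(\eta))$ computed in $U_c(D_0)$. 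The $e$-coordinate $f(e)+g(e)$ matches the first coordinate. Hence addition is respected.

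For multiplication the computation is purely formal: $m_{D_0}\circ(f\otimes g)\circ\Delta^\times$ sends $e$ to $f(e)g(e)$ and $\eta$ to $f(e)^pg(\eta)+f(\eta)g(e)^p$, which are precisely the two coordinates of the product in $U_c(D_0)$. For the units, the additive identity of $\Hom_{\Z/p}(Q_c,D_0)$ is the homomorphism factoring through the additive counit, i.e.\ $e,\eta\mapsto 0$, corresponding to $(0,0)$; this is the additive identity of $U_c(D_0)$ because $C_p(x_0,0)=0$. Likewise the multiplicative counit gives $e\mapsto 1$, $\eta\mapsto 0$, i.e.\ $(1,0)$, which the construction of $U_c$ already records as the multiplicative identity. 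Since the bijection carries $0,1,+,\times$ on one side to those on the other, it is an isomorphism of rings; one may add, as a consistency check with the antipode $e\mapsto-e$, $\eta\mapsto-\eta$, that because $p$ is odd $(-x_0)^p=-x_0^p$, forcing $C_p(x_0,-x_0)=0$, so the additive inverse of $(x_0,x_1)$ in $U_c(D_0)$ is indeed $(-x_0,-x_1)$. The only real obstacle here is bookkeeping the coaddition formula for $\eta$ and recognizing $C_p$ inside it; everything else is a direct expansion.
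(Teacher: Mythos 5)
Your verification is correct and is exactly the ``straightforward verification'' the paper asserts but omits: freeness of $Q_c=(\Z/p)[e,\eta]$ gives the bijection on underlying sets, and the induced operations $m_{D_0}\circ(f\otimes g)\circ\Delta^{+}$, $m_{D_0}\circ(f\otimes g)\circ\Delta^{\times}$ match the $U_c$ operations once one recognizes $C_p(X,Y)=-\sum_{j=1}^{p-1}\tfrac1p\binom{p}{j}X^{p-j}Y^{j}$, with the counits giving $(0,0)$ and $(1,0)$. No gaps; this is the intended argument.
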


The image of $f$ under this map (in the case $c=1$) is easily seen to be the first two Witt components. Furthermore, we may implement the $\Z/p^2$-algebra structure on $U_c(D_0)$ by the map
\begin{eqnarray*}
\beta\colon \Z/p^2&\to&\Hom_{Z/p^2}(Q,D_0)\\
a&\mapsto&(a,\theta_c(a))
\end{eqnarray*}
(where $\theta_c\colon\Z/p^2\to \Z/p$ is the unique total $p$-derivation with $\theta_c(p)=c$)
so that for $e\in\Z/p^2$, $e\cdot x=(\beta(e))x$ where the product is taken in $U_c(D_0)$.
 
There is an enriched tensor product operation $\odot$ that acts as an left adjoint to Homs of algebras so that if $D_0$ is a $\Z/p$-algebra and $B_1$ is a $\Z/p^2$-algebra,
\[\Hom_{\Z/p}(Q_c\odot B_1,D_0)=\Hom_{\Z/p^2}(B_ 1,\Hom_{\Z/p}(Q_c,D_0)).\]
Total $p$-differentials can be constructed in this formalism. One defines $Q_t=(\Z/p)[t,e,\eta]$ using the above biring structure with the indeterminate $t$ substituted for $c$. Then, for a $\Z/p^2$-algebra $B_1$, $Q_c\odot B_1$ is the free symmetric $(B_1\otimes_{\Z/p^2} \Z/p)$-algebra on the total $p$-differentials $\Omega^{1,\tot}_{B_1}$.
 
%Let $Q=R_1[e]$ and consider the natural projection and inclusion $Q_t\to Q$ and $Q\hookrightarrow Q_t$. 
%
% There is a natural projection $Q_t\to Q=R_1[e]$ that induces a homomorphism for any ring $R$
% \[\Hom(Q,R)\to \Hom(Q_t,R).\]
%
%
% 
%we also have 
% $$ \beta: R_1 \to \CRing_{R_1}(Q,R_0) $$
%  $$ a \mapsto (a,\theta_t(a)), $$
% where $\theta_t:R_1 \to R_0$ is the unique total $p$-derivation with $\delta(p) = t$.  
%
%
%We may adjust this situation to families by leaving $t$ to be an indeterminate. 
%In this situation $Q$ will be an algebra over $C=R_1[t]$ (which one may view as a family over $\A^1$). 
%
%
%The minimal $\theta: A_1 \to B_0$ has $A_1 = R_1$ and $B_0 = R_0[t]$. Here for $r \in R_1$ we have 
% $$ \theta_{p,t}(r) = \frac{r-r^p}{p}\cdot t \in R_0[t]. $$
%The biring $Q$ and is now a $C=R_1[t]$-algebra will all the coalgebra structures being $C$-linear. 
%
%
%We can prolong the above operation to the more general situation. If $A_1\in \CRing_{R_1}$ and $B_0\in \CRing_{R_0[t]}$-algebra then morphisms $ Q \odot A_1 \to B_0 $ (equivalently) $A_1 \to \CRing_{R_1[t]}(Q,B_0)$ make sense. 
%In particular the algebra structures are induced by 
% $$ \beta: R_1 \to \CRing_{R_1[t]}(Q,B_0) $$
% $$ \beta(a) = (a,\theta_{p,t}(a)). $$
%
%
%Observe that here the source space $A_1$ does not need to have a the structure of an $R_1[t]$-algebra but both the biring $Q$ and the target space $B_0$ do. 

\section{Extension classes from the fundamental exact sequence}

In this section, we will suppose that $X$ is smooth over $R$. Recall that a splitting of the fundamental exact sequence, if it exists, induces a lift of Frobenius on $X$.  As an extension, the fundamental exact sequence gives an element
\[\kappa\in \Ext^1(F_{X_0}^*\Omega^1_{X_0},\cO_{X_0})=H^1(X_0,F_{X_0}^*TX_0)\]
 (where $TX_0$ is dual to $\Omega^1_{X_0}$) that is the obstruction to a lift of Frobenius. 
We might call this the arithmetic Kodaira--Spencer class. If we imagine $\Spec R$ to be a point with a vector in the ``$p$-direction'', this would be analogous to the Kodaira--Spencer map applied to this vector. The extension class is interpreted as a \v{C}ech class by picking a covering $U_i$ of $X$ such that there is a splitting $\sigma_i\colon F_{(U_i)_0}^*\Omega^1_{(U_i)_0}\to \Omega^{1,\tot}_{U_i}$. Then,  $s_{ij}=\sigma_i-\sigma_j$ gives a homomorphism $F_{(U_i\cap U_j)_0}^*\Omega^1_{(U_i\cap U_j)_0}\to \cO_{(U_i\cap U_j)_0}$. The cocycle $\{s_{ij}\}$ is a representative of $\kappa$ in  $H^1(X_0,\HHom(F_{X_0}^*\Omega_{X_0},\cO_{X_0}))=H^1(X_0,F_{X_0}^*TX_0).$

The Deligne--Illusie class~\cite[Sec.~2]{DeligneI:rel} (see also \cite{Dupuy2017}), which is sometimes treated in the literature as analogous to the Kodaira--Spencer class, also obstructs the lift of Frobenius, and it is indeed equivalent to the above class. We can construct the Deligne--Illusie class as a \v{C}ech class.  Cover $X$ with affine opens $U_i$ which carry lifts of semi-linear Frobenius $F_{U_i}\colon U_i\to U_i$. One defines a $1$-cocycle on $X$ by 
\[
c_{ij}=F^*_{U_i}-F^*_{U_j}\colon  \cO_{U_i\cap U_j}\to pF_*\cO_{U_i\cap U_j}\]
where $F$ is some lift of Frobenius on $U_i\cap U_j$ (the subsheaf $pF_*\cO_{U_i\cap U_j}$ is independent of choices).
Then $c_{ij}\colon \cO(U_i\cap U_j)\to \cO(U_i\cap U_j)$ factors through the differential $d$ in the sense that there is a cocycle 
\[h_{ij}\colon \Omega^1_{X_0}(U_i\cap U_j)\to (F_{X_0})_*\cO_{X_0}(U_i\cap U_j)\]
that fits into a commutative diagram
\[\xymatrix{
\cO_{X}\ar[d]_{\pi_0} \ar[r]^{c_{ij}} & p F_*\cO_X\\
\cO_{X_0}\ar[d]_d&\\
\Omega^1_{X_0}\ar[r]^>>>>>{h_{ij}}\ar[r]&(F_{X_0})_*\cO_{X_0}\ar[uu]_p
}\]
This cocycle $h_{ij}$ gives an element of 
\[H^1(X_0,\HHom(\Omega^1_{X_0},(F_{X_0})_*\cO_{X_0}))=H^1(X_0,\HHom(F_{X_0}^*\Omega_{X_0}^1,\cO_{X_0})).\]
%\eric{Unfortunately, the first is actually Frobenius semi-linear Hom's. Can someone please find a nice way of writing that? Probably we can twist one of the sheaves by Frobenius.}

\begin{remark} The Deligne--Illusie class is also defined using lifts of relative Frobenius $F$ and written as a element of $H^1(X^{(p)}_0,\HHom(\Omega^1_{X^{(p)}_0},F_*\cO_{X_0}))$ where $X_0^{(p)}$ is the base-change by Frobenius, $X_0\times_\bk \bk$. To see that  this is equivalent, we relate the  sheaves of differentials that appear. The absolute Frobenius on $X_0$ factors as
\[\xymatrix{
X_0\ar[r]^F&X_0^{(p)}\ar[r]^G&X_0}\]
where $G$ is the isomorphism induced by the base-change by Frobenius on $\bk$.
From the exact sequence 
\[\xymatrix{
0\ar[r]&G^*\Omega^1_{X_0}\ar[r]&\Omega^1_{X_0^{(p)}}\ar[r]&\Omega^1_{X_0^{(p)}/X_0}\ar[r]&0
}\]
and the vanishing of $\Omega^1_{X_0^{(p)}/X_0}$, we have an isomorphism between $G^*\Omega^1_{X_0}$ and $\Omega^1_{X_0^{(p)}}$. Pulling each back by $F$, we get
\[F_{X_0}^*\Omega^1_{X_0}\cong F^*\Omega^1_{X_0^{(p)}}.\]
\end{remark}

\begin{theorem}
We have the equality up to sign of the extension class and the Deligne--Illusie class $\kappa=-h$.
\end{theorem}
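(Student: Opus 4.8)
The plan is to compute both \v{C}ech cocycles with respect to a common affine cover and match them term by term. Choose an affine open cover $\{U_i\}$ of $X$ fine enough that each $U_i$ carries a lift of Frobenius $\phi_i\colon U_i\to U_i$; by Lemma~\ref{l:locallifts} (globalized) this is the same data as a splitting $h_i\colon \Omega^{1,\tot}_{U_i}\to \cO_{(U_i)_0}$ of $\alpha$, related by $h_i(d^{\tot}x)=(\phi_i(x)-x^p)/p$. The dual splitting $\sigma_i\colon F_{(U_i)_0}^*\Omega^1_{(U_i)_0}\to \Omega^{1,\tot}_{U_i}$ appearing in the definition of $\kappa$ is then $\id-\alpha\circ h_i$ composed with the identification on the complement of $\cO$, and the extension cocycle is $s_{ij}=\sigma_i-\sigma_j$, a map $F^*\Omega^1\to\cO$ on $U_i\cap U_j$. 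On the Deligne--Illusie side, the cocycle is built from $c_{ij}=\phi_i^*-\phi_j^*$ restricted to $\cO_{X_0}$, which factors through $d$ to give $h_{ij}\colon \Omega^1_{X_0}\to (F_{X_0})_*\cO_{X_0}$, i.e. an element of $\HHom(F_{X_0}^*\Omega^1_{X_0},\cO_{X_0})$ after adjunction.

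The key computation is to evaluate both cocycles on a local generator. On $U_i\cap U_j$ pick a function $x$ and work out $s_{ij}$ applied to the class of $dx$: since $\sigma_i$ sends the image of $dx$ to $d^{\tot}x - h_i(d^{\tot}x)\,d^{\tot}p$, the difference $s_{ij}(dx) = \sigma_i(dx)-\sigma_j(dx)$ lands in $\cO_{(U_i\cap U_j)_0}$ and equals $-(h_i(d^{\tot}x)-h_j(d^{\tot}x)) = -\bigl((\phi_i(x)-x^p)/p - (\phi_j(x)-x^p)/p\bigr) = -(\phi_i(x)-\phi_j(x))/p$. On the other side, $h_{ij}$ is characterized by $p\cdot h_{ij}(dx) = c_{ij}(x) = \phi_i(x)-\phi_j(x)$ in $\cO_X$, so $h_{ij}(dx) = (\phi_i(x)-\phi_j(x))/p$ in $\cO_{X_0}$. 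Hence $s_{ij} = -h_{ij}$ term by term, and passing to cohomology gives $\kappa = -h$.

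The main obstacle I expect is bookkeeping, not substance: one must be careful about (a) the Frobenius twist, ensuring that $s_{ij}$ and $h_{ij}$ are compared as elements of the \emph{same} group $H^1(X_0,\HHom(F_{X_0}^*\Omega^1_{X_0},\cO_{X_0}))=H^1(X_0,F_{X_0}^*TX_0)$ — this is exactly the adjunction $\HHom(\Omega^1,(F_{X_0})_*\cO)\cong\HHom(F_{X_0}^*\Omega^1,\cO)$ used implicitly in both constructions; (b) the additive anomaly, i.e. checking that the $C_p(a,b)\,d^{\tot}p$ correction term in the relations defining $\Omega^{1,\tot}$ is precisely what makes $x\mapsto(\phi_i(x)-\phi_j(x))/p$ both additive and factor through $d$, so that $h_{ij}$ is well-defined — but this is already verified in the proof of Lemma~\ref{l:locallifts} and in the Deligne--Illusie setup, so one only needs to invoke it; and (c) fixing the sign convention in the definition of the extension class $\kappa$ of a short exact sequence via a splitting cocycle $\sigma_i-\sigma_j$, which is where the minus sign originates. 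Once a consistent convention is fixed the identification is immediate, and independence of the cover follows because a common refinement computes both classes compatibly.
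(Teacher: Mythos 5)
Your proposal is correct and follows essentially the same route as the paper: the same cover carrying local Frobenius lifts, the identification of lifts with splittings $h_i$ via Lemma~\ref{l:locallifts}, the induced splittings $\sigma_i(F^*dx_0)=d^{\tot}x-\alpha(h_i(d^{\tot}x))$, and the term-by-term comparison $s_{ij}(dx_0)=-(h_i(d^{\tot}x)-h_j(d^{\tot}x))=-h_{ij}(dx_0)$ giving $\kappa=-h$. Your added care about the adjunction $\HHom(\Omega^1_{X_0},(F_{X_0})_*\cO_{X_0})\cong\HHom(F_{X_0}^*\Omega^1_{X_0},\cO_{X_0})$ and the sign convention only makes explicit what the paper leaves implicit.
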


\begin{proof}
The Frobenius lift $F_{U_i}^*\colon \cO(U_i)\to \cO(U_i)$ is equivalent to a splitting $h_i\colon \Omega^{1,\tot}_{U_i}\to \cO_{(U_i)_0}$ over $U_i$ by
\[F^*_{U_i}(x)=x^p+h_i(d^{\tot}x).\]
This splitting induces $\sigma_i\colon F_{U_i}^*\Omega^1_{(U_i)_0}\to\Omega^{1,\tot}_{U_i}$ 
by $\sigma_i(F_{U_i}^*dx_0)=d^{\tot} x-\alpha(h_i(d^{\tot} x))$ for any lift $x$ of $x_0$.
Consequently, for $x\in \cO(U_i\cap U_j)$, we have 
\[\alpha(F^*_{U_i}(x)-F^*_{U_j}(x))=\alpha(h_i(d^{\tot}x)-h_j(d^{\tot}x))=\sigma_j(F_{U_j}^*dx_0)-\sigma_i(F_{U_i}^*dx_0)\]
giving the equality of the Deligne--Illusie and the extension classes.
\end{proof}

\section{Arithmetic Gauss--Manin homomorphism}

There is an arithmetic Gauss--Manin homomorphism in our framework. It is connecting homomorphism  $H^0(X_0,F_{X_0}^*\Omega_{X_0}^1)\to H^1(X_0,\cO_{X_0})$ of the long exact sequence attached to 
\[\xymatrix{0\ar[r] &\cO_{X_0}\ar[r]&\Omega_X^{1,\tot}\ar[r]&F_{X_0}^*\Omega^1_{X_0}\ar[r]&0.}\]
This is analogous to the Gauss--Manin connection for the first de Rham cohomology over a one-dimensional base $g\colon Y\to S$ arising from the exact sequence
\[\xymatrix{0\ar[r] &g^*\Omega^1_S\ar[r]&\Omega_Y^1\ar[r]&\Omega^1_{Y/S}\ar[r]&0.}\]

\begin{theorem}
The arithmetic Gauss--Manin homomorphism is given by cup product with the extension class $\kappa$.
\end{theorem}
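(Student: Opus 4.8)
The plan is to prove this by a direct Čech-theoretic computation, following the standard pattern for identifying a connecting homomorphism with cup product against an extension class (as in the Katz--Oda formalism). Fix an affine open cover $\{U_i\}$ of $X_0$ that trivializes the fundamental exact sequence, with splittings $\sigma_i\colon F_{(U_i)_0}^*\Omega^1_{(U_i)_0}\to \Omega^{1,\tot}_{U_i}$, so that (as in the previous theorem) $\kappa$ is represented by the $1$-cocycle $s_{ij}=\sigma_j-\sigma_i\colon F_{X_0}^*\Omega^1_{X_0}\to \cO_{X_0}$ on overlaps. Start with a global section $\omega\in H^0(X_0, F_{X_0}^*\Omega^1_{X_0})$. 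To compute its image under the connecting map, lift $\omega$ locally via the splittings: $\tilde\omega_i=\sigma_i(\omega)\in \Omega^{1,\tot}_{U_i}$. On overlaps $U_i\cap U_j$, the difference $\tilde\omega_i-\tilde\omega_j$ maps to zero under $\beta$, hence lies in the image of $\alpha$, and $\alpha^{-1}(\tilde\omega_i-\tilde\omega_j) = -s_{ij}(\omega)$; this $1$-cocycle with values in $\cO_{X_0}$ is by definition a Čech representative of $\partial\omega\in H^1(X_0,\cO_{X_0})$. But the cocycle $\{-s_{ij}(\omega)\}$ is, essentially tautologically, the cup product (i.e. the pairing) of the $0$-cochain $\omega$ with the $1$-cocycle $\{s_{ij}\}\in \check H^1(X_0, \HHom(F_{X_0}^*\Omega^1_{X_0},\cO_{X_0}))$ representing $\kappa$, under the natural evaluation pairing
\[
H^0(X_0, F_{X_0}^*\Omega^1_{X_0}) \otimes H^1\bigl(X_0, \HHom(F_{X_0}^*\Omega^1_{X_0},\cO_{X_0})\bigr) \longrightarrow H^1(X_0,\cO_{X_0}).
\]
Up to the sign conventions already tracked in the identification $\kappa=-h$ and in the definition of the connecting homomorphism, this gives $\partial\omega = \omega\smile\kappa$, which is the assertion.

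Concretely, the steps in order are: (1) recall the Čech description of the connecting homomorphism $\partial$ for the short exact sequence $0\to\cO_{X_0}\to\Omega_X^{1,\tot}\to F_{X_0}^*\Omega^1_{X_0}\to 0$ relative to a cover on which it splits; (2) recall that on such a cover the extension class $\kappa$ is represented by $\{\sigma_j-\sigma_i\}$ viewed in $\HHom(F_{X_0}^*\Omega^1_{X_0},\cO_{X_0})$; (3) feed a global section $\omega$ into both constructions and observe that the resulting $1$-cocycle in $\cO_{X_0}$ agrees; (4) match this to the Čech model of the cup product $H^0\otimes H^1\to H^1$ induced by the evaluation pairing $F_{X_0}^*\Omega^1_{X_0}\otimes\HHom(F_{X_0}^*\Omega^1_{X_0},\cO_{X_0})\to\cO_{X_0}$; (5) reconcile signs. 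I would also remark that independence of the chosen cover and splittings follows because both sides are intrinsically defined, so it suffices to check the identity after passing to a common refinement.

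The main obstacle is not conceptual but bookkeeping: getting the signs right. There are three places a sign can enter — the definition of the connecting homomorphism (whether one uses $\delta_i - \delta_j$ or $\delta_j - \delta_i$ in the Čech differential), the orientation of the extension class $\kappa$ as an element of $\Ext^1$ versus its Čech cocycle $\{\sigma_i-\sigma_j\}$, and the Koszul sign in the cup product pairing a degree-$0$ with a degree-$1$ class. One must fix conventions consistently with those used in the $\kappa=-h$ theorem so that the final statement is clean; depending on the convention the answer is $\omega\smile\kappa$ possibly up to an overall sign, which is the reason the preceding theorem is phrased "up to sign." A secondary, purely expository, point is to make precise what "cup product with $\kappa$" means here, since $\kappa$ naturally lives in $H^1(X_0,F_{X_0}^*TX_0)=H^1(X_0,\HHom(F_{X_0}^*\Omega^1_{X_0},\cO_{X_0}))$ and the pairing used is the evaluation (contraction) pairing; once that is spelled out, the computation is routine.
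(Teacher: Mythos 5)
Your proposal is correct and follows essentially the same argument as the paper: lift the global section locally via splittings of the fundamental exact sequence, observe that the Čech boundary cocycle is $(\sigma_i-\sigma_j)$ applied to the section, and identify this with the evaluation/cup-product pairing against the cocycle $\{s_{ij}\}$ representing $\kappa$. The only divergence is your sign convention for $s_{ij}$, which you already flag and which does not affect the substance.
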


\begin{proof}
We compute the arithmetic Gauss--Manin homomorphism. Let $\widetilde{\omega}\in H^0(X_0,F_{X_0}^*\Omega_{X_0}^1).$ We pick a covering $U_i$ of $X$ such that  on $U_i$, $\widetilde{\omega}_i$ lifts to 
$\omega_i\in \Omega^{1,\tot}_{U_i}$. Then, the image of the Gauss--Manin homomorphism is the cocycle $g_{ij}=\omega_i-\omega_j$. Because $\widetilde{\omega}_i=\widetilde{\omega}_j$ on $U_i\cap U_j$, $g_{ij}$ is valued in $\cO_{X_0}$.

By refining the cover, we may suppose that we have a splitting of the fundamental exact sequence on each $U_i$, $\sigma_i\colon F_{(U_i)_0}^*\Omega_{(U_i)_0}^{1}\to\Omega_{U_i}^{1,\tot}$. Set $\omega_i=\sigma_i(\widetilde{\omega}_i)$
Then the image of the Gauss--Manin map is given by the cocycle $g_{ij}=\sigma_i(\omega_i)-\sigma_j(\omega_j)$. On $U_i\cap U_j$ we have
\begin{eqnarray*}
g_{ij}&=&\sigma_i(\widetilde{\omega}_i)-\sigma_j(\widetilde{\omega}_j)\\
&=&\sigma_i(\widetilde{\omega}_i)-\sigma_j(\widetilde{\omega}_i)\\
&=&(\sigma_i-\sigma_j)(\widetilde{\omega}_i)\\
&=&s_{ij}(\widetilde{\omega}_i).
\end{eqnarray*}
Therefore, by unwinding the definition of the cup product, we see 
\[\{g_{ij}\}=\{\widetilde{\omega}_i\}\cup\{s_{ij}\}.\]
\end{proof}
\bibliographystyle{amsalpha}
\bibliography{master}

\def\cprime{$'$}
\providecommand{\bysame}{\leavevmode\hbox to3em{\hrulefill}\thinspace}
\providecommand{\MR}{\relax\ifhmode\unskip\space\fi MR }
% \MRhref is called by the amsart/book/proc definition of \MR.
\providecommand{\MRhref}[2]{%
  \href{http://www.ams.org/mathscinet-getitem?mr=#1}{#2}
}
\providecommand{\href}[2]{#2}
\begin{thebibliography}{Dup17b}

\bibitem[Bor09]{Borger:lambdarings}
James Borger, \emph{$\lambda$-rings and the field with one element}, arXiv
  preprint arXiv:0906.3146 (2009).

\bibitem[Bui95]{buium:1995}
Alexandru Buium, \emph{Differential characters of abelian varieties overp-adic
  fields}, Inventiones mathematicae \textbf{122} (1995), no.~1, 309--340.

\bibitem[Bui96]{buium:p-adic-jets}
\bysame, \emph{Geometry of {$p$}-jets}, Duke Math. J. \textbf{82} (1996),
  no.~2, 349--367.

\bibitem[Bui97]{Buium1997}
\bysame, \emph{Arithmetic analogues of derivations}, J. Algebra \textbf{198}
  (1997), no.~1, 290--299.

\bibitem[Bui05]{buium:2005}
\bysame, \emph{Arithmetic differential equations}, no. 118, American
  Mathematical Soc., 2005.

\bibitem[BW05]{borger2005plethystic}
James Borger and Ben Wieland, \emph{Plethystic algebra}, Advances in
  Mathematics \textbf{194} (2005), no.~2, 246--283.

\bibitem[DFR17]{DFR:orderone}
Taylor Dupuy, James Freitag, and Aaron Royer, \emph{Order one differential
  equations on nonisotrivial algebraic curves}, arXiv preprint arXiv:1707.08714
  (2017).

\bibitem[DI87]{DeligneI:rel}
Pierre Deligne and Luc Illusie, \emph{Rel\`evements modulo {$p^2$} et
  d\'ecomposition du complexe de de {R}ham}, Invent. Math. \textbf{89} (1987),
  no.~2, 247--270.

\bibitem[Dup17a]{Dupuy:kodaira}
Taylor Dupuy, \emph{Deligne--{I}llusie classes as arithmetic
  {K}odaira--{S}pencer classes}, in preparation (2017).

\bibitem[Dup17b]{Dupuy2017}
\bysame, \emph{Lifted torsors of lifts of the {F}robenius for curves}, Preprint
  available at \url{https://www.uvm.edu/~tdupuy/}.

\bibitem[Har77]{Hartshorne:AG}
Robin Hartshorne, \emph{Algebraic geometry}, Springer-Verlag, New York, 1977,
  Graduate Texts in Mathematics, No. 52.

\bibitem[KO68]{KatzO:differentiation}
Nicholas~M. Katz and Tadao Oda, \emph{On the differentiation of de {R}ham
  cohomology classes with respect to parameters}, J. Math. Kyoto Univ.
  \textbf{8} (1968), 199--213.

\bibitem[Ser68]{Serre1968}
J.-P. Serre, \emph{Corps locaux}, Hermann, Paris, 1968, Deuxi{\`e}me
  {\'e}dition, Publications de l'Universit{\'e} de Nancago, No. VIII.

\end{thebibliography}

\vspace{.2in}

\end{document}